\documentclass[reqno,11pt]{amsart}
\usepackage{amssymb}
\usepackage{verbatim}
\newif\ifpdf
\ifpdf
  \usepackage[pdftex]{graphicx}
  \usepackage[pdftex]{hyperref}
\else
  \usepackage{graphicx}
\fi
%
%
\textwidth=135mm
\textheight=210mm

\numberwithin{equation}{section}       
\setcounter{secnumdepth}{4}

 \theoremstyle{plain}    
 \newtheorem{thm}{Theorem}[section]
 \numberwithin{equation}{section} 
 \numberwithin{figure}{section} 
 \theoremstyle{plain}
 \theoremstyle{plain}    
 \theoremstyle{plain}    
 \theoremstyle{plain}    
 \newtheorem{lem}[thm]{Lemma} 
 \theoremstyle{remark}
 \newtheorem{rem}[thm]{Remark}
 \theoremstyle{definition}

\theoremstyle{definition}

\newtheorem*{ackn}{Acknowledgement}

\newcommand{\C}{{\mathbf{C}}}

\newcommand{\N}{{\mathbf{N}}}

\newcommand{\R}{{\mathbf{R}}}

\newcommand{\PP}{{\mathbf{P}}}

\newcommand{\cE}{{\mathcal{E}}}
\newcommand{\cF}{{\mathcal{F}}}

\renewcommand{\a}{\alpha}

\newcommand{\MA}{\mathrm{MA}\,}

\newcommand{\vol}{\operatorname{vol}}

%
%

\begin{document}

\setcounter{tocdepth}{1}

\title{Equidistribution of Fekete points on complex manifolds}

\date{\today{}}

\author{Robert Berman, S\'{e}bastien Boucksom}

\address{Universit\'{e} Grenoble I\\
 Institut Fourier\\
 Saint-Martin d'Heres\\
 France}

\email{robertb@math.chalmers.se}

\address{CNRS-Universit\'{e} Paris 7\\
 Institut de Math\'{e}matiques\\
 F-75251 Paris Cedex 05\\
 France}

\email{boucksom@math.jussieu.fr}

\begin{abstract}
We prove the several variable version of a classical equidistribution
theorem for Fekete points of a compact subset of the complex plane, which settles a well-known conjecture in pluri-potential theory. The result is obtained as a special case of a
general equidistribution theorem for Fekete points in
the setting of a given holomorphic line bundle over a compact complex
manifold. The proof builds on our recent work {}``Capacities and
weighted volumes for line bundles''.
\end{abstract}
\maketitle

\section{Introduction}

A classical potential theoretic invariant of a compact set $E$ in
the complex plane $\C$ is given by its \emph{transfinite diameter:}\begin{equation}
d_{\infty}(E):=\lim_{k\rightarrow\infty}\left(\sup_{z_{0},...,z_{k}\in E}\prod_{0\leq i<j\leq k}\left|z_{i}-z_{j}\right|\right)^{2/k(k-1)}.\label{eq:intro
  diam}\end{equation}
 i.e.~the asymptotic geometric mean distance of points in $E$. A
configuration $z^{(k)}\in E^{N_{k}}$ of points achieving the
supremum for a fixed $k$ is called a $k$-\emph{Fekete configuration}
for $E$ (in classical terminology the corresponding points $(z_{i}^{(k)}),$
for $k$ fixed, are called \emph{Fekete points}). A basic classical
theorem (see \cite{s-t} for a modern reference and \cite{dei} for
the relation to Hermitian random matrices) asserts that Fekete configurations equidistribute on the \emph{equilibrium measure} $\mu_E$ of $E$, i.e.  
$$\frac{1}{k+1}\sum_{i=1}^{N_k}\delta_{z^{(k)}_{i}}\to\mu_E$$
as $k\to\infty$. 
In the logarithmic pluripotential
theory in $\C^{n}$ a higher dimensional version of the transfinite
diameter was introduced by Leja in 1959 
\begin{equation}
d_{\infty}(E):=\limsup_{k\to\infty}\left(\sup_{z_{1},\dots,z_{N_{k}}\in E}\left|\Delta(z_{1},...,z_{N_{k}})\right|\right)^{(n+1)!/nk^{n+1}},\label{eq:def of classical transf}\end{equation}
 where $\Delta(z_{1},...,z_{N_{k}})$ is the following higher-dimensional
Vandermonde determinant: \[
\Delta(z_{1},...,z_{N_{k}}):=\det(e^{\a}(z_{j}))_{|\a|\leq k,1\leq j\leq N_{k}},\]
 where $e^{\a}(x)=x_1^{\a_1}...x_n^{\a_n}$, $\a\in\N^n$ denote the monomials and $N_{k}$
is the number of monomials of degree at most $k$ (so that $N_{k}=k^{n}/n!+O(k^{n-1}))$.
It was shown by Zaharjuta \cite{za} that the limsup in formula (\ref{eq:def of classical transf})
is actually a true limit, thereby answering a conjecture by Leja.
However, the corresponding convergence of higher-dimensional {}``Fekete
configurations'' (towards the pluripotential theoretic equilibrium measure
of $E)$ has remained an open problem (see the survey \cite{l} by
Levenberg on approximation theory in $\C^{n},$ where it is pointed
out (p. 120) that {}``to this date, \emph{nothing} is known if $n>1$'').

There is also a weighted version of the previous setting where the
set $E$ is replaced by the \emph{weighted set} $(E,\phi),$ with
$\phi$ is a continuous function on the compact set $E$ (see the
appendix by Bloom in \cite{s-t}). The weighted transfinite diameter
$d_{\infty}(E,\phi)$ is then obtained by replacing $\left|\Delta(z_{1},...,z_{N_{k}})\right|$
in (\ref{eq:def of classical transf}) by its weighted counterpart
$$\left|\Delta(z_{1},...,z_{N_{k}})\right|e^{-k(\phi(z_{1})+...+\phi(z_{N}))}.$$

Even more generally, there is a variant of this weighted setting where
$E$ may be assumed to be an unbounded set in $\C^{n}$ provided
that $\phi$ growths sufficiently fast at infinity (cf.~the appendix
by Bloom in \cite{s-t}).

The aim of this note is to prove a generalized version of the conjecture
referred to above in the more general setting of a big line
bundle $L$ over a complex manifold $X$ (recall that from an analytic perspective $L$ is big iff it admits a singular Hermitian metric with strictly positive curvature current). In this setting the
role of $e^{-\phi}$ is played by a Hermitian metric on $L$ (we will
call the additive object $\phi$ a \emph{weight} - see \cite{b-b}
for further notation). A weighted subset $(E,\phi)$ will thus consist
of a subset $E$ of $X$ and a continuous weight on $L$. The {}``classical
setting'' above corresponds to the hyperplane line bundle $\mathcal{O}(1)$
over the complex projective space $\PP^{n}$ with $E$ a compact set
in the affine piece $\C^{n}$ and the space $H^{0}(kL)$ of global
holomorphic sections with values in $kL$ (the $k$-th tensor power
of $L,$ written in additive notation) may in this case be identified
with the space of all polynomials on $\C^{n}$ of total degree at
most $k.$ 

In order to state the theorem we first recall some further notation,
mostly taken from \cite{b-b}. Fix a compact subset $E$ of $X$ which
is not locally pluripolar and a continuous weight $\phi$ on the line
bundle $L\rightarrow X$. The \emph{equilibrium weight} of $(E,\phi)$ is defined by 
\begin{equation}
\phi_{E}=\sup\left\{ \varphi,\,\varphi\,\textrm{psh weight on$\, L$},\,\varphi\leq\phi\,\,\textrm{on$\, E$}\right\} ,\label{eq:extem metric}\end{equation}
where a weight $\varphi$ on $L$ is called \emph{psh} if its curvature $dd^c\varphi$ is a positive current. In the $\C^{n}$-case above,
$\phi_{E}$ is usually referred to as the \emph{weighted Siciak extremal
function} of $(E,\phi)$ (cf.~e.g.~the appendix
by Bloom in \cite{s-t}).
The \emph{equilibrium measure} of the weighted
set $(E,\phi)$ is then defined as the Monge-Amp\`ere measure \begin{equation}
\mu_{(E,\phi)}:=\MA(\phi_{E}^{*}),\label{eq:def of equil}\end{equation}
of the psh weight with minimal singularities $\phi_E^*$ (the usc envelope of $\phi_E$), that is the trivial extension to $X$ of the Bedford-Taylor wedge-product $(dd^c\phi_E^*)^n$ computed on a Zariski open subset where $\phi_E^*$ is locally bounded (see \cite{b-b}). 

Next let $(s_1,...,s_N)$ be a basis of $H^{0}(L)$ and consider the following
Vandermonde-type determinant 
$$\det(s)(x_{1},\dots,x_{N}):=\det(s_{i}(x_{j}))_{i,j}$$
 which is a holomorphic section of the pulled-back line bundle $L^{\boxtimes N}$
over the $N$-fold product $X^{N}$. 

A configuration of points $P=(x_{1},...,x_{N})\in E^N$
is called a \emph{Fekete configuration} for the weighted subset $(E,\phi)$ if it realizes the supremum on $E^N$ of the pointwise length $\left|\det(s)\right|_\phi$ of $\det(s)$ with respect to the metric induced by $\phi$. It is a basic observation that the definition of a Fekete configuration
is actually independent of the choice of the basis $(s_{i})$. Indeed, since
the top exterior power of the vector space $H^{0}(L)$ is one-dimensional,
replacing $s_{i}$ by $s_{i}'$ gives $\det(s')=c\det(s'),$ where
$c$ is a non-zero complex number. 

As a last piece of notation, given a configuration $P=(x_1,...,x_N)\in X^N$ for some $N$, we will denote by $$\delta_P:=\frac{1}{N}\sum_{i=1}^{N}\delta{x_i}$$
the associated probability measure.

\begin{thm}
\label{thm:fekete conv} Let $L\rightarrow X$ be a big line bundle
over a compact complex manifold. Assume that $P_k\in X^{N_k}$
is a Fekete configuration for $(E,k\phi)$ for each $k$ large enough. Then this sequence of configurations equidistributes towards the equlibrium measure of $(E,\phi)$, i.e.
$$\lim_{k\to\infty}\delta_{P_k}=M^{-1}\mu_{(E,\phi)}$$
weakly as measures on $X$, where $M$ is the total mass of $\mu_{(E,\phi)}$. 
\end{thm}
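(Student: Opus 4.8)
The plan is to realise the empirical measures $\delta_{P_k}$ as (sub)gradients of a sequence of convex functionals on the space of continuous weights, and to deduce their convergence from the convergence of those functionals to a differentiable limit whose gradient is the equilibrium measure. For a continuous function $u$ on $X$ set
\[
F_k(u):=\frac{1}{kN_k}\log\sup_{P\in E^{N_k}}\left|\det(s)\right|_{k(\phi+u)}(P).
\]
Since $\left|\det(s)\right|_{k(\phi+u)}(P)=\left|\det(s)\right|_{k\phi}(P)\,e^{-kN_k\int u\,d\delta_P}$, the functional $F_k$ is a supremum over $P\in E^{N_k}$ of functions that are affine in $u$, hence convex; moreover the configurations attaining the supremum at $u=0$ are exactly the Fekete configurations for $(E,k\phi)$. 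It follows that $-\delta_{P_k}$ belongs to the subdifferential of $F_k$ at $u=0$. The theorem is thereby reduced to identifying $\lim_k F_k$ and differentiating it at $0$.

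The analytic heart is the identification
\[
\lim_{k\to\infty}F_k(u)=-M^{-1}\cE_{\mathrm{eq}}(\phi+u)+\mathrm{const},
\]
where $\cE_{\mathrm{eq}}(\psi)$ denotes the energy at equilibrium of the weight $\psi$ studied in \cite{b-b}. I would establish this in two moves. First, fixing any auxiliary measure $\nu$ on $E$, the Andreief--Heine identity
\[
\int_{E^{N_k}}\left|\det(s)\right|^2_{k\phi}\,d\nu^{\otimes N_k}=N_k!\,\det\Bigl(\int_E\langle s_i,s_j\rangle_{k\phi}\,d\nu\Bigr)_{i,j}
\]
expresses the $L^2$-mass of $\left|\det(s)\right|^2$ through the Gram determinant of $H^0(kL)$ in the norm attached to $(E,k\phi,\nu)$; the hypotheses that $E$ is non-pluripolar and $\phi$ continuous furnish a Bernstein--Markov comparison showing that the sup-Fekete functional and this Gram-determinant functional share the same exponential asymptotics, their logarithms differing by $o(kN_k)$. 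Second, the companion paper \cite{b-b} computes the asymptotics of exactly these Gram-determinant (ball-volume) functionals in terms of $\cE_{\mathrm{eq}}$, and---this is the crucial input---shows that $\psi\mapsto\cE_{\mathrm{eq}}(\psi)$ is differentiable with
\[
\frac{d}{dt}\Big|_{t=0}\cE_{\mathrm{eq}}(\phi+tu)=\int_X u\,\mu_{(E,\phi)}.
\]

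It remains to invoke the convex-analytic bridge. The convex functionals $F_k$ converge pointwise to $F(u)=-M^{-1}\cE_{\mathrm{eq}}(\phi+u)+\mathrm{const}$, which is differentiable at $u=0$ with gradient $-M^{-1}\mu_{(E,\phi)}$. By the standard fact that pointwise convergence of convex functions forces convergence of subgradients at any point where the limit is differentiable---here the relevant compactness is automatic, since the $\delta_{P_k}$ are probability measures on the compact set $E$---the subgradients $-\delta_{P_k}$ converge to $-M^{-1}\mu_{(E,\phi)}$. Testing against an arbitrary continuous $u$, this says $\int u\,d\delta_{P_k}\to M^{-1}\int u\,d\mu_{(E,\phi)}$, which is precisely the asserted weak convergence; note that $M=\vol(L)$ is the total mass of $\MA(\phi_E^*)$ while $N_k\sim\vol(L)\,k^n/n!$, so the limit is indeed a probability measure, consistent with each $\delta_{P_k}$.

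The main obstacle is the functional identification of the previous paragraph. Its deepest ingredient, the differentiability of $\cE_{\mathrm{eq}}$ with derivative the equilibrium measure, is quoted from \cite{b-b} and reflects the delicate regularity of the equilibrium weight $\phi_E^*$ and of its Monge--Amp\`ere measure. The genuinely new point is the passage from the $L^2$ (Gram-determinant) asymptotics to the sup-Fekete functional: bounding $\sup_{E^{N_k}}\left|\det(s)\right|_{k\phi}$ by its $L^2$ average up to a subexponential factor, by applying a Bernstein--Markov estimate separately in each of the $N_k$ variables, is exactly where non-pluripolarity of $E$ is indispensable, and is the step I would expect to demand the most care.
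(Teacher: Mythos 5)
Your proposal is correct and follows essentially the same route as the paper: both rest on the asymptotics of the normalized log-sup of the weighted Vandermonde determinant (the main result of \cite{b-b}, whose Bernstein--Markov/Gram-determinant proof you sketch rather than quote), on the Fr\'echet differentiability of the equilibrium energy with derivative $\mu_{(E,\phi)}$ (Theorem 5.7 of \cite{b-b}), and on a convexity argument converting convergence of the functionals into convergence of their (sub)gradients. The only difference is packaging: you take subgradients of the convex sup-functional, while the paper evaluates the affine functionals $\phi\mapsto D_{k\phi}(P_k)$ at the fixed Fekete configurations and applies an elementary lemma on concave functions --- these are the same argument.
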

Here $N_k:=h^0(kL)\simeq\vol(L)k^n/n!$, where $\vol(L)>0$ is the \emph{volume} of the big line bundle $L$, which is also equal to the total mass $M$ of $\mu_{(E,\phi)}$ (see~\cite{b-b}). 

In the one dimensional case, i.e.~when $X$ is a complex curve and
if $(E,\phi)$ is taken as $(E,0)$ where $E$ is a compact set contained
in an affine piece of $X$ where $L$ has been trivialized, the theorem
was obtained, using completely different methods, by Bloom and Levenberg
\cite{b-l1} (see also \cite{g-m-s} for the case when $X$ has genus
zero and for a discussion of related interpolation problems in $\R^{n}.$)

The proof of Theorem \ref{thm:fekete conv} builds on our previous
work \cite{b-b}, where it was shown that (minus the logarithm of)
the transfinite diameter of $(E,\phi),$ considered as a functional
on the affine space of all continuous weights on $L$ (for $E$ fixed)
is Fr\'echet differentiable: its differential at the weight $\phi$
may be represented by the corresponding equilibrium measure $\mu_{(E,\phi)}.$
A similar argument was used very recently by the first author and
Witt Nyström in \cite{b-w} to obtain very general convergence results
for {}``Bergman measures'' (corresponding to Christoffel-Darboux
functions in classical terminology). In fact, a unified treatment
of these two convergence results can be given, which also includes
the equidistribution of generic points of {}``small height'' on
an arithmetic variety obtained in \cite{b-b} (which in turn generalizes
Yuan's arithmetic equidistribution theorem \cite{yu}). This will
be further investigated elsewhere.

\begin{rem}
In the case that $L$ is ample the differentiability property referred
to above can also be deduced from the Bergman kernel asymptotics for
smooth weights in \cite{berm2} as explained in section 1.4 in \cite{b-b}
(see also remark 11.2 in \cite{b-b} ). For an essentially elementary
proof of these asymptotics in the weighted case in $\C^{n}$ see \cite{berm1}.
\end{rem}
\begin{ackn}
It is a pleasure to thank Bo Berndtsson, Jean-Pierre Demailly and
David Witt Nyström for stimulating discussions related to the topic
of this note. We are also grateful to Norman Levenberg for his interest
in and comments on the paper \cite{b-b}.
\end{ackn}

\subsection{Proof of Theorem \ref{thm:fekete conv}}
Given a basis $s=(s_1,...,s_N)$ of $H^0(L)$, set 
$$D_\phi:=\log|\det(s)|_\phi,$$ 
so that $D_\phi$ achieves its maximum on $E^N$ exactly at Fekete configurations of $(E,\phi)$ by definition. As noticed above, $D_\phi$ only depends on the choice of the basis $s$ up to an additive constant. If $P=(x_1,...,x_N)\in X^N$ is a given configuration, the more explicit formula
$$D_\phi(P)=\log\left|\det(s_i(x_j)\right|-\left(\phi(x_1)+...+\phi(x_N)\right)$$
clearly shows that $\phi\mapsto D_\phi(P)$ is an affine function, with linear part given by integration against $-N\delta_P$. 

In order to normalize $D_\phi$, we fix an auxiliary (smooth positive) volume form $\mu$ on $X$ and smooth metric $e^{-\psi}$ on $L$ and take $(s_1,...,s_N)$ to be an orthonormal basis of $H^0(L)$ with respect to the corresponding $L^2$ scalar product. It is easily seen (cf.~\cite{b-b}) that $D_\phi$ becomes independent of the choice of such an orthonormal basis. The main result of~\cite{b-b} implies that 
\begin{equation}\label{equ:limit}\lim_{k\to\infty}\frac{(n+1)!}{k^{n+1}}\sup_{E^{N_k}} D_{k\phi}=\cE(\psi_X,\phi_E^*)
\end{equation}
where $\psi_X$ is the equilibrium weight of $(X,\psi)$ and $\cE$ denotes the Aubin-Yau energy, whose precise formula doesn't matter here. The main point for what follows is that 
$$\phi\mapsto\cE(\psi_X,\phi_E^*)$$
is Fr\'echet differentiable on the space of all continuous weights on $L$, with derivative at $\phi$ in the tangent direction $v\in C^0(X)$ given by
$$(n+1)\int_X v\,\mu_{(E,\phi)}.$$
This is indeed the content of Theorem 5.7 of \cite{b-b}. Now let $P_k\in E^{N_k}$ be a sequence of configurations, and set 
$$F_k(\phi):=-\frac{1}{kN_k} D_{k\phi}(P_k)$$
and 
$$G(\phi):=\frac{1}{(n+1)M}\cE(\phi_E^*,\psi_X).$$
We thus see that 
$$\liminf_{k\to\infty}F_k(\phi)\ge G(\phi),$$
and furthermore
$$\lim_{k\to\infty}F_k(\phi)=G(\phi)$$
if $P_k\in E^{N_k}$ is a Fekete configuration for $(E,k\phi)$, as follows from (\ref{equ:limit}) and the fact that $N_k\simeq M k^n/n!$. 

As noticed above, the functional $F_k$ is affine, with linear part given by integration against $\delta_{P_k}$. On the other hand the differentiability property of the energy writes
$$\frac{d}{dt}_{t=0}G(\phi+t v)=M^{-1}\int_X v\,\mu_{(E,\phi)}.$$

The proof of Theorem~\ref{thm:fekete conv} is thus concluded by the following elementary result applied to $f_k(t):=\cF_k(\phi+tv)$ and $g(t):=G(\phi+tv)$, taking $P_k\in E^{N_k}$ to be a Fekete configuration for $(E,k\phi)$ as in the Theorem.

\begin{lem}
Let $f_k$ by a sequence of concave functions
on $\R$ and let $g$ be a function on $\R$ such that 
\begin{itemize}
\item  $\liminf_{k\to\infty} f_k\ge g$.
\item $\lim_{k\to\infty} f_k(0)=g(0)$.
\end{itemize}
If the $f_k$ and $g$ are differentiable at $0$, then 
$$\lim_{k\to\infty}f_k'(0)=g'(0).$$
\end{lem}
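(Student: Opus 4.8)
The plan is to exploit the only structural hypothesis available, namely the concavity of each $f_k$, to convert the pointwise control on values ($\liminf_k f_k\ge g$ with equality at $0$) into control on the derivatives at $0$. The decisive elementary fact is that a concave function differentiable at $0$ lies below its tangent line there, i.e.\ $f_k(t)\le f_k(0)+f_k'(0)\,t$ for all $t\in\R$. Dividing by $t$ (and reversing the inequality when $t<0$) yields the two one-sided bounds
$$f_k'(0)\ge\frac{f_k(t)-f_k(0)}{t}\quad(t>0),\qquad f_k'(0)\le\frac{f_k(t)-f_k(0)}{t}\quad(t<0),$$
which are precisely what allows value-level information to pass to the slope.

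For the lower bound on $\liminf_k f_k'(0)$, I would fix $t>0$, take $\liminf_{k\to\infty}$ in the first inequality, and use $\liminf_k f_k(t)\ge g(t)$ together with $\lim_k f_k(0)=g(0)$ to obtain $\liminf_k f_k'(0)\ge\frac{g(t)-g(0)}{t}$. Letting $t\to0^+$ and invoking the differentiability of $g$ at $0$ then gives $\liminf_k f_k'(0)\ge g'(0)$. For the upper bound I would fix $t<0$ and take $\limsup_{k\to\infty}$ in the second inequality; here one must be careful that $1/t<0$, so that $\liminf_k f_k(t)\ge g(t)$ becomes $\limsup_k\frac{f_k(t)}{t}\le\frac{g(t)}{t}$, and combining this with $\lim_k f_k(0)=g(0)$ produces $\limsup_k f_k'(0)\le\frac{g(t)-g(0)}{t}$. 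Letting $t\to0^-$ yields $\limsup_k f_k'(0)\le g'(0)$. Chaining the two estimates gives
$$g'(0)\le\liminf_{k\to\infty}f_k'(0)\le\limsup_{k\to\infty}f_k'(0)\le g'(0),$$
which forces equality throughout and establishes the claimed limit.

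The argument carries no serious obstacle; the only point requiring genuine attention is the book-keeping of signs when dividing by $t$ and when passing between $\limsup$ and $\liminf$, since the roles of the two one-sided limits of $g$ at $0$ are interchanged between the regimes $t>0$ and $t<0$. Conceptually, the essential input is concavity: it is exactly the tangent-line inequality that upgrades the one-sided hypothesis $\liminf_k f_k\ge g$, a statement about values, into a matching two-sided control on $f_k'(0)$. Without concavity the conclusion would of course fail, so this is the property I would flag as doing all the real work.
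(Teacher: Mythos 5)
Your proof is correct and follows essentially the same route as the paper's: both start from the tangent-line inequality $f_k(t)\le f_k(0)+f_k'(0)t$ given by concavity, pass to the limit to get $\liminf_k t f_k'(0)\ge g(t)-g(0)$, and then let $t\to 0^+$ and $t\to 0^-$ separately. You have merely written out explicitly the sign bookkeeping that the paper leaves to the reader.
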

\begin{proof}
Since $f_k$ is concave, we have 
$$f_k(0)+f_k'(0)t\ge f_k(t)$$ 
and it follows that
$$\liminf_{k\to\infty} t f_k'(0)\ge g(t)-g(0).$$
The result now follows by first letting $t>0$ and then $t<0$ tend to $0$. 
\end{proof}
The same lemma underlies the proof of Yuan's equidistribution theorem given in~\cite{b-b}. It is in fact inspired by the variational principle in the original proof by Szpiro-Ullmo-Zhang. The case of concave functions $f_k$ pertains to the situation considered in \cite{b-w}.


\begin{thebibliography}{10}
\bibitem{berm1}Berman, R: Bergman kernels and weighted equilibrium measures of $\C^{n}.$
Preprint in 2007 at arXiv.org/abs/math.CV/0702357. To appear in Indiana
University Journal of Mathematics.
\bibitem{berm2}Berman, R., Bergman kernels and equilibrium measures for line bundles
over projective manifolds. Preprint in 2007 at arXiv:0710.4375 
\bibitem{b-b}Berman, R; Boucksom, S: Capacities and weighted volumes for line bundles.
arXiv: 0803.1950. 
\bibitem{b-w}Berman, R; Witt Nyström, D: Convergence of Bergman measures for high
powers of a line bundle. arXiv:0805.2846
\bibitem{b-l1}Bloom, T; Levenberg,N: Distribution of nodes on algebraic curves in
$\C^{N}.$ Annales de l'institut Fourier, 53 no. 5 (2003), p. 1365-1385
\bibitem{dei}Deift, P. A. Orthogonal polynomials and random matrices: a Riemann-Hilbert
approach. Courant Lecture Notes in Mathematics, 3. New York University,
Courant Institute of Mathematical Sciences, New York; American Mathematical
Society, Providence, RI, 1999.
\bibitem{g-m-s}Götz, M; Maymesku, V; Saff, E. B.: Asymptotic distribution of nodes
for near-optimal polynomial interpolation on certain curves in $\R^{2}.$
Constructive Approximation 18 (2002), p. 255-284
\bibitem{l}Levenberg, N: Approximation in $\C^{N}.$ Surveys in Approximation
Theory, 2 (2006), 92-140 
\bibitem{s-t}Saff.E; Totik.V: Logarithmic potentials with exteriour fields. Springer-Verlag,
Berlin. (1997) (with an appendix by Bloom, T) 
\bibitem{yu}Yuan, X: Big line bundles over arithmetic varieties, arXiv:math/0612424.
\bibitem{za}Zaharjuta.V: Transfinite diameter, Chebyshev constants, and capacity
for compacta in C\textasciicircum{}n, Math. USSR Sbornik Vol 25 (1975),
350--364 \end{thebibliography}
\end{document}